\newtheorem{definition}{Definition}[section]
\newtheorem{proposition}{Proposition}[section]
\newtheorem{theorem}{Theorem}[section]
\newtheorem{lemma}{Lemma}[section]
\author{Ratan Lal, Alka Choudhary and Vipul Kakkar}
\title{Determinant for automorphisms of semidirect product of groups}
\begin{document}

	\maketitle
	\begin{flushleft}
		\textrm{Desh Bhagat Pandit Chetan Dev Government College of Education, Faridkot}\\
	\textrm{Manipal University, Jaipur, Rajasthan, India}\\
	\textrm{Central University of Rajasthan, Ajmer, Rajasthan, India}\\
\end{flushleft}
	E-mails: \url{vermarattan789@gmail.com}, \url{alkababal@gmail.com}, \url{vplkakkar@gmail.com}
	
	\abstract{A description of the endomorphisms of semidirect products of two groups as a group of $2\times 2$ matrices of maps is already known. Using this description, we have studied the concept of determinant for the endomorphisms of semidirect product of two groups. A characterization of the invertible endomorphisms is given with the help of the tools developed using the determinants.}\\
		
\noindent	\rm{Keywords: Semidirect product, Determinant, Automorphism group}
	\section{Introduction}
	
	The study of automorphism groups of groups have been an area of interest for many algebraists. The automorphism group of direct products \cite{bcm, jb}, semidirect products \cite{cur, jd, hsu} and Zappa-Sz\'{e}p products \cite{zapfix, zap} have been studied as the group of $2\times 2$ matrices of maps that satisfy some specific conditions. 
	
	In \cite{mb}, the authors have studied matrix characterizations of the automorphism groups and given some algorithms for an endomorphism of direct product of groups to be invertible. Let $\begin{pmatrix}
	\alpha & \beta \\ \gamma & \delta
	\end{pmatrix}$ be a matrix associated to an endomorphism $\phi$ of direct product of two groups $H$ and $K$ (see \cite{mb} for details). If $\alpha$ is invertible, then the authors have associated a map called the determinant of $\phi$ and defined as $\det_{H}(\phi) = \alpha - \beta\delta^{-1}\gamma$. Similarly, if $\delta$ is invertible, then they have defined the determinant of $\phi$  as $\det_{K}(\phi) = \delta - \gamma\alpha^{-1}\beta$.

	Note that, in the usual matrices over a field, a characterization for a matrix to be invertible is that its determinant should be non-zero or in other words, the determinant is invertible. In \cite{mb}, the authors have extended this characterization and proved that an endomorphism of direct product of groups is invertible if and only if the determinant of the associated matrix is invertible.
	
	Let $H$ and $K$ be two groups and $f: K \longrightarrow Aut(H)$ a group homomorphism defined by $f(k)(h) = f_{k}(h)$ for all $h\in H$ and $k\in K$, where $Aut(H)$ is the automorphism group of $H$. Then the set $H\times K = \{(h, k)\mid h\in H, k\in K\}$ forms a group with the binary operation defined by 
	\begin{equation*}
	(h_{1}, k_{1})(h_{2}, k_{2}) = (h_{1}f_{k_{1}}(h_{2}), k_{1}k_{2}).
	\end{equation*}
	This group is called the external semidirect product of groups and is denoted by $H\rtimes_{\phi} K$. When it is clear from the description of the semidirect product, we will denote it by $H\rtimes K$. On the other hand, let $G$ be a group. Let $H$ be a normal subgroup of $G$ and $K$ be a subgroup of $G$ such that $H \cap K = \{1\}$ and $G = HK$. The left action of $H$ on $K$ is defined as $h^{k} = khk^{-1}$ for all $h\in H$ and $k\in K$. Then $G$ is called the internal semidirect product of $H$ and $K$. We will identify the internal semidirect product with the external semidirect product.

	Note that the semidirect product of two groups is the natural generalization of the direct product of two groups. This motivates us to study the determinants for the semidirect product of two groups. In this paper, we have defined both the determinants $\det_{H}$ and $\det_{K}$ for an endomorphism of semidirect product of two groups $H$ and $K$. In section \ref{s2}, we have given an isomorphism between the endomorphism group of semidirect product of two groups and the monoid consisiting of $2\times 2$ matrices of maps satisfying some conditions (see Theorem \ref{t1}). In section \ref{s3}, we have defined both the determinants $\det_{H}$ and $\det_{K}$ for a matrix associated with an endomorphism. Also, we have a characterization for an endomorphism of semidirect product of two groups to be invertible in the Theorem \ref{t32}. Further, we have proved that for an automorphism of semidirect product of two groups $H$ and $K$, the determinant $\det_{H}$ is invertible if and only if  $\det_{K}$ is invertible.
	
	Let $H$ and $K$ be two groups. Then, throughout the paper $Hom(H, K)$, $End(H)$ and $CHom(H, K)$ denote the group of all the homomorphisms from $H$ to $K$,  the group of all the endomorphisms of $H$ and the group of all the crossed homomorphisms from $H$ to $K$ respectively. $Z(H)$ will denote the center of the group $H$.  
	
	\section{Preliminaries}\label{s2}
	Let $U$, $V$ and $W$ be any groups. Then $Map(U, V)$ denotes the set of all maps between the groups $U$ and $V$. If $\phi, \psi \in Map(U, V)$ and $\eta \in Map(V, W)$, then $\phi + \psi \in	Map(U, V)$ is defined by $(\phi + \psi)(u) = \phi(u)\psi(u)$, $\eta \phi\in Map(U, W)$ is defined by $\eta\phi(u) = \eta(\phi(u))$, $-\phi \in Map(U, V)$ is defined as $-\phi(u) = \phi(u)^{-1}$ and $\phi^{\psi}\in Map(U, V)$ is defined by $(\phi^{\psi})(u) = \phi(u)^{\psi(u)} = \psi(u)\phi(u)\psi(u)^{-1}$ for all $u\in U$.

	Let $H$ and $K$ be groups. Then consider the set
	\begin{equation*}
	\mathcal{M} = \left\{\begin{pmatrix}
	\alpha & \beta\\ \gamma & \delta
	\end{pmatrix}
	\mid \begin{matrix}
	\alpha \in Map(H, H), & \beta\in Map(K, H)\\ \gamma\in Hom(H, K), & \text{and}\;\delta \in End(K)
	\end{matrix} \right\},
	\end{equation*}
	where the maps $\alpha$, $\beta$, $\gamma$ and $\delta$ satisfy the following
	\begin{itemize}
		\item[$(i)$] $\alpha(hh^{\prime}) = \alpha(h)\alpha(h^{\prime})^{\gamma(h)}$,
		\item[$(ii)$] $\beta(kk^{\prime}) = \beta(k)\beta(k^{\prime})^{\delta(k)}$,
		\item[$(iii)$] $\gamma(h^{k})\delta(k) = \delta(k)\gamma(h)$,
		\item[$(iv)$] $\alpha(h^{k})\beta(k)^{\gamma(h^{k})} = \beta(k)\alpha(h)^{\delta(k)}$ 
	\end{itemize}
	for all $h,h^{\prime}\in H$ and $k, k^{\prime}\in K$. Then the set $\mathcal{M}$ forms a monoid with the binary operation defined as
	\[\begin{pmatrix}
	\alpha^{\prime} & \beta^{\prime}\\ \gamma^{\prime} & \delta^{\prime}
	\end{pmatrix}\begin{pmatrix}
	\alpha & \beta\\ \gamma & \delta
	\end{pmatrix} = \begin{pmatrix}
	\alpha^{\prime}\alpha + (\beta^{\prime}\gamma)^{\gamma^{\prime}\alpha} & \alpha^{\prime}\beta + (\beta^{\prime}\delta)^{\gamma^{\prime}\beta}\\ \gamma^{\prime}\alpha + \delta^{\prime}\gamma & \gamma^{\prime}\beta + \delta^{\prime}\delta
	\end{pmatrix}  \]
	and the identity element as $\begin{pmatrix}
	1 & 0\\ 0 & 1
	\end{pmatrix}$, where $1$ denotes the identity morphism and $0$ denotes the trivial morphism.

	\begin{theorem}\label{t1}
		Let $H$ and $K$ be groups. Then $End(H\rtimes K) \simeq \mathcal{M}$ as monoids. 
	\end{theorem}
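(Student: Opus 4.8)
The plan is to set up an explicit bijection $\Phi\colon End(H\rtimes K)\to\mathcal{M}$ and verify it respects the monoid operations. Given $\phi\in End(H\rtimes K)$, write $i_H\colon H\to H\rtimes K$, $h\mapsto(h,1)$, and $i_K\colon K\to H\rtimes K$, $k\mapsto(1,k)$, for the canonical inclusions (group homomorphisms), and $\pi\colon H\rtimes K\to K$, $(h,k)\mapsto k$, for the canonical projection, which is a homomorphism with kernel $i_H(H)$. Define $\alpha,\gamma$ by $\phi(h,1)=(\alpha(h),\gamma(h))$ and $\beta,\delta$ by $\phi(1,k)=(\beta(k),\delta(k))$. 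Then $\alpha\in Map(H,H)$ and $\beta\in Map(K,H)$ trivially, while $\gamma=\pi\phi i_H$ and $\delta=\pi\phi i_K$ are composites of homomorphisms, so $\gamma\in Hom(H,K)$ and $\delta\in End(K)$. Set $\Phi(\phi)=\begin{pmatrix}\alpha&\beta\\\gamma&\delta\end{pmatrix}$.

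To see $\Phi(\phi)\in\mathcal{M}$, I would extract the four identities $(i)$–$(iv)$ from the single fact that $\phi$ preserves products. Condition $(i)$ is the $H$-coordinate of $\phi((h,1)(h',1))=\phi(hh',1)$ (its $K$-coordinate just re-records that $\gamma$ is a homomorphism); condition $(ii)$ is the $H$-coordinate of $\phi((1,k)(1,k'))=\phi(1,kk')$; and conditions $(iii)$, $(iv)$ come from expanding $(1,k)(h,1)=(h^{k},k)=(h^{k},1)(1,k)$ in two ways and applying $\phi$, where the $K$-coordinates give $(iii)$ and the $H$-coordinates give $(iv)$. These are short mechanical calculations. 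Since $(h,k)=(h,1)(1,k)$, one gets $\phi(h,k)=(\alpha(h)\,\beta(k)^{\gamma(h)},\ \gamma(h)\delta(k))$, so $\phi$ is completely determined by $\Phi(\phi)$; hence $\Phi$ is injective.

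For surjectivity I would run the argument backwards: given $\begin{pmatrix}\alpha&\beta\\\gamma&\delta\end{pmatrix}\in\mathcal{M}$, define $\psi\colon H\rtimes K\to H\rtimes K$ by $\psi(h,k)=(\alpha(h)\beta(k)^{\gamma(h)},\ \gamma(h)\delta(k))$ and check that $\psi$ is an endomorphism. Expanding $\psi((h_1,k_1)(h_2,k_2))=\psi(h_1h_2^{k_1},k_1k_2)$ and $\psi(h_1,k_1)\psi(h_2,k_2)$ and comparing, the $K$-coordinate reduces to $(iii)$ together with $\gamma$, $\delta$ being homomorphisms, and the $H$-coordinate, after moving the relevant actions inside, reduces to $(i)$, $(ii)$ and $(iv)$. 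I expect this to be the main obstacle: it is the one place where all four matrix conditions must be used at once, and it demands careful bookkeeping with the "addition" $\phi+\psi$ and the conjugation-action notation $\phi^{\psi}$ for maps. By construction $\Phi(\psi)=\begin{pmatrix}\alpha&\beta\\\gamma&\delta\end{pmatrix}$, so $\Phi$ is surjective, hence bijective.

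Finally I would show $\Phi$ is a monoid homomorphism. Clearly $\Phi(\mathrm{id})=\begin{pmatrix}1&0\\0&1\end{pmatrix}$. For products, evaluate $\phi'\circ\phi$ on $(h,1)$ and on $(1,k)$: writing $\phi(h,1)=(\alpha(h),\gamma(h))=(\alpha(h),1)(1,\gamma(h))$ gives $(\phi'\circ\phi)(h,1)=\phi'(\alpha(h),1)\,\phi'(1,\gamma(h))=(\alpha'\alpha(h)\cdot(\beta'\gamma(h))^{\gamma'\alpha(h)},\ \gamma'\alpha(h)\cdot\delta'\gamma(h))$, which is exactly the first column $(\alpha'\alpha+(\beta'\gamma)^{\gamma'\alpha},\ \gamma'\alpha+\delta'\gamma)$ of the matrix product; the analogous computation with $\phi(1,k)=(\beta(k),1)(1,\delta(k))$ yields the second column $(\alpha'\beta+(\beta'\delta)^{\gamma'\beta},\ \gamma'\beta+\delta'\delta)$. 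Therefore $\Phi(\phi'\circ\phi)=\Phi(\phi')\,\Phi(\phi)$, and $\Phi$ is the desired monoid isomorphism.
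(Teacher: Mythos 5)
Your proposal is correct and follows essentially the same route as the paper: the same coordinate maps $\alpha,\beta,\gamma,\delta$, the same derivation of conditions $(i)$--$(iv)$ from multiplicativity on $H$, on $K$, and on the relation $(1,k)(h,1)=(h^{k},1)(1,k)$, the same reconstruction $\psi(h,k)=(\alpha(h)\beta(k)^{\gamma(h)},\gamma(h)\delta(k))$ for surjectivity, and the same column-by-column computation for the homomorphism property. If anything, your injectivity argument (an endomorphism is determined by its restrictions to the two factors, hence by its matrix) is cleaner than the paper's trivial-kernel argument, which on its own is not a valid criterion for injectivity of a monoid homomorphism.
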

	\begin{proof}
		Let $G = H\rtimes K$ and $\theta\in End(G)$. Then for all $h\in H$ and $k\in K$, we define the maps $\alpha$, $\beta$, $\gamma$ and $\delta$ as $\theta(h) = \alpha(h)\gamma(h)$ and $\theta(k) = \beta(k)\delta(k)$. Now, for all $h, h^{\prime}\in H$, we have
		\begin{align*}
		\alpha(hh^{\prime})\gamma(hh^{\prime}) &= \theta(hh^{\prime})\\
		 &= \theta(h)\theta(h^{\prime})\\
		  &= \alpha(h)\gamma(h)\alpha(h^{\prime})\gamma(h^{\prime})\\
	&= \alpha(h)\alpha(h^{\prime})^{\gamma(h)}\gamma(h)\gamma(h^{\prime}).
		\end{align*}
		  Using the uniqueness of representation, we get $\alpha(hh^{\prime}) = \alpha(h)\alpha(h^{\prime})^{\gamma(h)}$ and $\gamma(hh^{\prime}) = \gamma(h)\gamma(h^{\prime})$. Now, for all $k,k^{\prime}\in K$, we have
		  \begin{align*}
		  \beta(kk^{\prime})\delta(kk^{\prime}) &= \theta(kk^{\prime})\\
		  &= \theta(k)\theta(k^{\prime})\\
		  &= \beta(k)\delta(k)\beta(k^{\prime})\delta(k^{\prime})\\
	&= \beta(k)\beta(k^{\prime})^{\delta(k)}\delta(k)\delta(k^{\prime}).
		  \end{align*}
		   Using the uniqueness of representation, we get $\beta(kk^{\prime}) = \beta(k)\beta(k^{\prime})^{\delta(k)}$ and $\delta(kk^{\prime}) = \delta(k)\delta(k^{\prime})$.
		
		Now, for all $h \in H$ and $k \in K$, 
		\begin{align*}
		\alpha(h^{k}) \beta(k)^{\gamma(h^{k})} \gamma(h^{k}) \delta(k) &= \alpha(h^{k}) \gamma(h^{k}) \beta(k) \delta(k)\\
		&= \theta(h^{k}k)\\
		&= \theta(kh)\\
		&= \theta(k) \theta(h)\\
		&= \beta(k) \delta(k) \alpha(h) \gamma(h)\\
		&= \beta(k) \alpha(h)^{\delta(k)} \delta(k) \gamma(h).
		\end{align*}
	 This implies that $\alpha(h^{k}) \beta(k)^{\gamma(h^{k})} = \beta(k) \alpha(h)^{\delta(k)}$ and $\gamma(h^{k}) \delta(k) = \delta(k) \gamma(h)$. Therefore, the maps $\alpha$, $\beta$, $\gamma$ and $\delta$ satisfy all the conditions $(i)-(iv)$ and so, $\begin{pmatrix}
		\alpha & \beta \\ \gamma & \delta
		\end{pmatrix} \in \mathcal{M}$.
		
		Thus to each $\theta \in End(G)$ we can associate an element $\begin{pmatrix}
		\alpha & \beta \\ \gamma & \delta
		\end{pmatrix}\in \mathcal{M}$. This defines a map $\lambda: End(G) \longrightarrow \mathcal{M}$ by
		\begin{equation*}
		\lambda(\theta) = \begin{pmatrix}
		\alpha & \beta \\ \gamma & \delta
		\end{pmatrix}.
		\end{equation*}
		Now, we will prove that the map $\lambda$ is a monoid homomorphism. Let $\theta, \theta^{\prime}\in End(G)$. Let $\alpha$, $\beta$, $\gamma$ and $\delta$ be the maps associated with $\theta$ and $\alpha^{\prime}$, $\beta^{\prime}$, $\gamma^{\prime}$ and $\delta^{\prime}$ be the maps associated with $\theta^{\prime}$. Then for all $hk\in G$, we have
		
		\begin{align*}
		&\theta^{\prime}\theta(hk)\\ 
		&= \theta^{\prime}(\theta(hk))\\
		&= \theta^{\prime}(\alpha(h)\gamma(h)\beta(k)\delta(k))\\
		&= \theta^{\prime}(\alpha(h)\beta(k)^{\gamma(h)}\gamma(h)\delta(k))\\
		&= \alpha^{\prime}(\alpha(h)\beta(k)^{\gamma(h)})\gamma^{\prime}(\alpha(h)\beta(k)^{\gamma(h)})\beta^{\prime}(\gamma(h)\delta(k))\delta^{\prime}(\gamma(h)\delta(k))\\
		&= \alpha^{\prime}(\alpha(h))\alpha^{\prime}(\beta(k)^{\gamma(h)})^{\gamma^{\prime}(\alpha(h))}\gamma^{\prime}(\alpha(h))\gamma^{\prime}(\beta(k)^{\gamma(h)})\beta^{\prime}(\gamma(h))\beta^{\prime}(\delta(k))^{\delta^{\prime}(\gamma(h))}\\
		&~~~~\delta^{\prime}(\gamma(h))\delta^{\prime}(\delta(k))\\
		&= \alpha^{\prime}\alpha(h)\gamma^{\prime}(\alpha(h))\alpha^{\prime}(\beta(k)^{\gamma(h)})\gamma^{\prime}(\beta(k)^{\gamma(h)})\beta^{\prime}(\gamma(h))\delta^{\prime}(\gamma(h))\beta^{\prime}(\delta(k))\delta^{\prime}\delta(k)\\
		&= \alpha^{\prime}\alpha(h)\gamma^{\prime}(\alpha(h))(\alpha^{\prime}(\beta(k)^{\gamma(h)})\beta^{\prime}(\gamma(h))^{\gamma^{\prime}(\beta(k)^{\gamma(h)})})(\gamma^{\prime}(\beta(k)^{\gamma(h)})\delta^{\prime}(\gamma(h)))\\
		&~~~~\beta^{\prime}(\delta(k))\delta^{\prime}\delta(k)\\
		&= \alpha^{\prime}\alpha(h)\gamma^{\prime}(\alpha(h))\beta^{\prime}(\gamma(h))\alpha^{\prime}(\beta(k))^{\delta^{\prime}(\gamma(h))}\delta^{\prime}(\gamma(h))\gamma^{\prime}(\beta(k))\beta^{\prime}(\delta(k))\delta^{\prime}\delta(k)\\
		&= \alpha^{\prime}\alpha(h)\beta^{\prime}(\gamma(h))^{\gamma^{\prime}(\alpha(h))}\gamma^{\prime}(\alpha(h))\delta^{\prime}(\gamma(h))\alpha^{\prime}(\beta(k))\beta^{\prime}(\delta(k))^{\gamma^{\prime}(\beta(k))}\gamma^{\prime}(\beta(k))\delta^{\prime}\delta(k)\\
		&= (\alpha^{\prime}\alpha + (\beta^{\prime}\gamma)^{\gamma^{\prime}\alpha})(h)(\gamma^{\prime}\alpha + \delta^{\prime}\gamma)(h)(\alpha^{\prime}\beta + (\beta^{\prime}\delta)^{\gamma^{\prime}\beta})(k)(\gamma^{\prime}\beta + \delta^{\prime}\delta)(k).
		\end{align*} 
		
		Thus, the maps associated with $\theta^{\prime}\theta\in End(G)$ are $a = \alpha^{\prime}\alpha + (\beta^{\prime}\gamma)^{\gamma^{\prime}\alpha}$, $b = \alpha^{\prime}\beta + (\beta^{\prime}\delta)^{\gamma^{\prime}\beta}$, $c = \gamma^{\prime}\alpha + \delta^{\prime}\gamma$, and $d = \gamma^{\prime}\beta + \delta^{\prime}\delta$. Also, it is easy to check that the maps $a$, $b$, $c$ and $d$ satisfy all the conditions $(A_{1}) - (A_{4})$ and so $\begin{pmatrix}
		a & b \\ c & d
		\end{pmatrix}\in \mathcal{A}$. Therefore,
		\begin{align*}
		\lambda(\theta^{\prime}\theta) =& \begin{pmatrix}
		\alpha^{\prime}\alpha + (\beta^{\prime}\gamma)^{\gamma^{\prime}\alpha}& \alpha^{\prime}\beta + (\beta^{\prime}\delta)^{\gamma^{\prime}\beta}\\  \gamma^{\prime}\alpha + \delta^{\prime}\gamma & \gamma^{\prime}\beta + \delta^{\prime}\delta
		\end{pmatrix}\\
		=& \begin{pmatrix}
		\alpha^{\prime} & \beta^{\prime} \\ \gamma^{\prime} & \delta^{\prime}	\end{pmatrix}\begin{pmatrix}
		\alpha & \beta \\ \gamma & \delta
		\end{pmatrix}\\
		=& \lambda(\theta^{\prime})\lambda(\theta).
		\end{align*} 
		Thus $\lambda$ is a homomorphism. Now, we prove that the map $\lambda$ is a bijection.
		
		Let $A = \begin{pmatrix}
		\alpha & \beta \\ \gamma & \delta
		\end{pmatrix} \in \mathcal{M}$, where the maps $\alpha$, $\beta$, $\gamma$ and $\delta$ satisfy the conditions $(i)-(iv)$. Then we define a map $\theta: G \longrightarrow G$ by $\theta(hk) = \alpha(h)\gamma(h)\beta(k)\delta(k)$ for all $h\in H$ and $k\in K$. Let $h, h^{\prime}\in H$ and $k, k^{\prime}\in K$. Then, we have
		
		\begin{align*}
		\theta((hk)(h^{\prime}k^{\prime})) &= \theta(h{h^{\prime}}^{k}kk^{\prime})\\
		&=  \alpha(h{h^{\prime}}^{k})\gamma(h{h^{\prime}}^{k})\beta(kk^{\prime})\delta(kk^{\prime})\\
		&=  \alpha(h)\alpha({h^{\prime}}^{k})^{\gamma(h)}\gamma(h)\gamma({h^{\prime}}^{k})\beta(k)\beta(k^{\prime})^{\delta(k)}\delta(k)\delta(k^{\prime})\\
		&=  \alpha(h)\gamma(h)\alpha({h^{\prime}}^{k})\gamma({h^{\prime}}^{k})\beta(k)\delta(k)\beta(k^{\prime})\delta(k^{\prime})\\
		&= \alpha(h)\gamma(h)(\alpha({h^{\prime}}^{k})\beta(k)^{\gamma({h^{\prime}}^{k})})(\gamma({h^{\prime}}^{k})\delta(k))\beta(k^{\prime})\delta(k^{\prime})\\
		&= \alpha(h)\gamma(h)\beta(k)\alpha(h^{\prime})^{\delta(k)}\delta(k)\gamma(h^{\prime})\beta(k^{\prime})\delta(k^{\prime})\\
		&= (\alpha(h)\gamma(h)\beta(k)\delta(k))(\alpha(h^{\prime})\gamma(h^{\prime})\beta(k^{\prime})\delta(k^{\prime}))\\
		&= \theta(hk)\theta(h^{\prime}k^{\prime}).
		\end{align*} 
		
		Thus $\theta \in End(G)$ such that $\lambda(\theta) = A$. This shows that $\lambda$ is a surjection.
		
		Now, let $\theta \in \ker(\lambda)$ be an element and $\alpha$, $\beta$, $\gamma$ and $\delta$ be the maps associated with $\theta$. Then $\begin{pmatrix}
		1 & 0 \\ 0 & 1
		\end{pmatrix} = \lambda(\theta) = \begin{pmatrix}
		\alpha & \beta \\ \gamma & \delta
		\end{pmatrix}$. This implies that $\alpha = 1$, $\beta = 0$, $\gamma = 0$ and $\delta = 1$. Therefore, $\theta(hk) = \alpha(h)\gamma(h)\beta(k)\delta(k) = hk$. Thus $\ker(\lambda) = \{I_{G}\}$, where $I_{G}$ is the identity map on $G$. This shows that the map $\lambda$ is a bijection. Hence, $End(G) \simeq \mathcal{M}$ as monoid.
	\end{proof}
	From here onwards, we will identify the endomorpisms of the semidirect product of groups $H$ and $K$ with the elements of $\mathcal{M}$.
	\section{Determinants}\label{s3}
	In this section, we will define the determinants associated with an endomorphism of semidirect product of groups. Let $H$ and $K$ be two groups and $\theta = \begin{pmatrix}
	\alpha & \beta\\ \gamma & \delta
	\end{pmatrix}\in \mathcal{M}$. If $\alpha$ is invertible, then we define $\det_{K}(\theta) = -\gamma\alpha^{-1}\beta + \delta$ to be the $K-$determinant of $\theta$ and if $\delta$ is invertible, then  $\det_{H}(\theta) = \alpha - \beta\delta^{-1} \gamma$ is defined to be the $H-$determinant of $\theta$. In general, $\det_{H}(\theta)$ or $\det_{K}(\theta)$ need not be a group homomorphism.
	
	\begin{theorem}\label{t31}
		Let $G = H\rtimes K$ and $\theta = \begin{pmatrix}
		\alpha & \beta\\ \gamma & \delta
		\end{pmatrix}$ be an endomorphism of $G$. Then the following hold
		\begin{itemize}
			\item[$(i)$] if $\alpha$ is invertible and $\Delta_{K} = \det_{K}(\theta)$ is invertible, then $\theta$ is invertible and 
			\[\theta^{-1} = \begin{pmatrix}
			\alpha^{-1}-\alpha^{-1}\beta{\Delta_{K}}^{-1}(-\gamma\alpha^{-1}) & -\alpha^{-1}\beta{\Delta_{K}}^{-1}\\
			{\Delta_{K}}^{-1}(-\gamma\alpha^{-1}) & {\Delta_{K}}^{-1}
			\end{pmatrix}. \] 
			Moreover, $\det_{H}(\theta^{-1}) = \alpha^{-1}$ and $\det_{H}(\theta)$ is a group homomorphism.
			\item[$(ii)$] if $\delta$ is invertible and $\Delta_{H} = \det_{H}(\theta)$ is invertible, then $\theta$ is invertible and 
			\[\theta^{-1} = \begin{pmatrix}
			{\Delta_{H}}^{-1} & {\Delta_{H}}^{-1}(-\beta\delta^{-1})\\
			-\delta^{-1}\gamma{\Delta_{H}}^{-1}& \delta^{-1}\gamma{\Delta_{H}}^{-1}(-\beta\delta^{-1}) + \delta^{-1}
			\end{pmatrix}. \] 
			Moreover, $\det_{K}(\theta^{-1}) = \delta^{-1}$ and $\det_{H}(\theta)$ is a group homomorphism.
		\end{itemize}
	\end{theorem}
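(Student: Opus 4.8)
\noindent\textit{Proof strategy.} The two displayed formulas for $\theta^{-1}$ are exactly the Schur‑complement (block Gaussian‑elimination) inverse of a $2\times 2$ matrix, with $\Delta_{K}=-\gamma\alpha^{-1}\beta+\delta$ (resp.\ $\Delta_{H}=\alpha-\beta\delta^{-1}\gamma$) in the role of the Schur complement of $\alpha$ (resp.\ of $\delta$); the non‑commutativity of $G=H\rtimes K$ and the fact that $H$ is the normal factor are what force the twisting exponents $(\cdot)^{(\cdot)}$ and the particular order of the factors. So the plan is simply to verify that these matrices are two‑sided inverses of $\theta$ in the monoid $\mathcal{M}$.

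Concretely, for $(i)$ I would set $\psi$ equal to the displayed matrix, note that its four entries are genuinely well‑defined maps (this is where the hypotheses that $\alpha$ and $\Delta_{K}$ are invertible enter, giving $\alpha^{-1}\in Map(H,H)$ and $\Delta_{K}^{-1}\in Map(K,K)$), and then compute the four entries of $\psi\theta$ and of $\theta\psi$ using the product of $\mathcal{M}$ from Section~\ref{s2}, checking that both products equal $\begin{pmatrix}1&0\\0&1\end{pmatrix}$. The ingredients of this calculation are: $\alpha^{-1}\alpha=\alpha\alpha^{-1}=1$ and $\Delta_{K}^{-1}\Delta_{K}=\Delta_{K}\Delta_{K}^{-1}=1$; the fact that $\gamma$ and $\Delta_{K}^{-1}$ are homomorphisms, so they pass through the $Map$‑operations $+$ and $-$ on the appropriate side; the defining relation $\Delta_{K}=-\gamma\alpha^{-1}\beta+\delta$; and the identities $(i)$--$(iv)$, with $(iv)$ controlling the cross terms that mix the $\alpha$‑ and $\beta$‑parts in the first row of each product and $(iii)$ the $\gamma$--$\delta$ interchange in the second row. (An equivalent route, which sidesteps checking \emph{a priori} that $\psi\in\mathcal{M}$, is to verify directly on $G$ that the set map $hk\mapsto\alpha(h)\gamma(h)\beta(k)\delta(k)$ is a bijection whose inverse is read off from $\psi$; then $\theta^{-1}\in End(G)$, and Theorem~\ref{t1} returns $\psi\in\mathcal{M}$ for free.) Once $\theta^{-1}=\psi$ is established, its bottom‑right entry is $\Delta_{K}^{-1}$, which is invertible, so $\det_{H}(\theta^{-1})$ is defined; expanding it from the formula and using $\Delta_{K}\bigl(\Delta_{K}^{-1}(-\gamma\alpha^{-1})\bigr)=-\gamma\alpha^{-1}$ together with cancellation in the group $(Map(H,H),+)$ collapses it to $\alpha^{-1}$.

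For the homomorphism assertion I would argue that the determinant in question occurs, up to taking inverses, as an entry of the matrix of $\theta$ or of $\theta^{-1}$, and that entry is constrained by membership in $\mathcal{M}$ to be a homomorphism (for the bottom‑right entry, an element of $End(K)$); in particular $\Delta_{K}$, being the inverse of the bottom‑right entry of $\theta^{-1}\in\mathcal{M}$, lies in $End(K)$. For part $(ii)$ I would run the whole argument with $\alpha$ and $\delta$, and $H$ and $K$, interchanged: take the displayed matrix as candidate inverse, check that both products equal the identity of $\mathcal{M}$, read off $\det_{K}(\theta^{-1})=\delta^{-1}$ from the bottom‑right entry, and obtain the homomorphism statement from the cocycle identity forced on the top‑left entry $\Delta_{H}^{-1}$ of $\theta^{-1}$. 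I expect the only real difficulty to be the bookkeeping with the twisting exponents in the middle step: in the direct‑product situation of \cite{mb} these exponents do not appear and the verification is ordinary matrix algebra, so the new content is to check, at each occurrence of a cross term in $\psi\theta$ and $\theta\psi$, that its twist either becomes trivial after simplification or is exactly annihilated by an application of $(iii)$ or $(iv)$. Everything after that is routine.
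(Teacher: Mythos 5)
Your proposal is correct and follows essentially the same route as the paper: take the displayed matrix as a candidate inverse, verify via the monoid multiplication of $\mathcal{M}$ (using the relations $\Delta_K\Delta_K^{-1}=1=\Delta_K^{-1}\Delta_K$, the crossed-homomorphism identities $(i)$--$(iv)$, and right-distributivity) that the product is the identity, then read off $\det_H(\theta^{-1})=\alpha^{-1}$ from the entries and deduce that $\Delta_K$ is a homomorphism because the bottom-right entry of $\theta^{-1}\in\mathcal{M}$ lies in $End(K)$. If anything you are slightly more careful than the paper, which only verifies one side of the product; your one imprecision --- invoking that $\Delta_K^{-1}$ is a homomorphism during the verification --- is not actually needed (only right-distributivity of composition over $+$ is used there), and is harmless since that fact is recovered a posteriori.
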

	\begin{proof}
		We will only prove the part $(i)$, as the proof of the part $(ii)$ will be on the same lines.
		
		Let $\alpha$ be invertible and $\Delta = \Delta_{K}$ invertible. Then $\Delta\Delta^{-1} = 1 = \Delta^{-1}\Delta$. This gives us
		\begin{equation}\label{e1}
		\gamma\alpha^{-1}\beta\Delta^{-1} + 1 = \delta\Delta^{-1}\; \text{and}\; \Delta^{-1}(-\gamma\alpha^{-1}\beta + \delta) = 1.
		\end{equation}
		Now, let $\alpha^{\prime} = \alpha^{-1}-\alpha^{-1}\beta{\Delta}^{-1}(-\gamma\alpha^{-1})$, $\beta^{\prime} = -\alpha^{-1}\beta{\Delta}^{-1}$, $\gamma^{\prime} = {\Delta}^{-1}(-\gamma\alpha^{-1})$ and $\delta^{\prime} = {\Delta}^{-1}$. Then we have
		\begin{align*}
		\alpha\alpha^{\prime} + (\beta\gamma^{\prime})^{\gamma\alpha^{\prime}} &= \alpha(\alpha^{-1}-\alpha^{-1}\beta{\Delta}^{-1}(-\gamma\alpha^{-1})) + (\beta{\Delta}^{-1}(-\gamma\alpha^{-1}))^{\gamma(\alpha^{-1}-\alpha^{-1}\beta{\Delta}^{-1}(-\gamma\alpha^{-1})
			)}\\
		&= \alpha(\alpha^{-1}-\alpha^{-1}\beta{\Delta}^{-1}(-\gamma\alpha^{-1})) + \alpha(\alpha^{-1}\beta{\Delta}^{-1}(-\gamma\alpha^{-1}))^{\gamma(\alpha^{-1}-\alpha^{-1}\beta{\Delta}^{-1}(-\gamma\alpha^{-1})
			)}\\
		&=  \alpha(\alpha^{-1}-\alpha^{-1}\beta{\Delta}^{-1}(-\gamma\alpha^{-1}) + \alpha^{-1}\beta{\Delta}^{-1}(-\gamma\alpha^{-1})) \\
		&= \alpha(\alpha^{-1})\\
		&= 1.\\
		\alpha\beta^{\prime} + (\beta\delta^{\prime})^{\gamma\beta^{\prime}} &= \alpha(-\alpha^{-1}\beta{\Delta}^{-1}) + (\beta{\Delta}^{-1})^{\gamma(-\alpha^{-1}\beta{\Delta}^{-1})} \\
		&= \alpha(-\alpha^{-1}\beta{\Delta}^{-1}) + \alpha(\alpha^{-1}\beta{\Delta}^{-1})^{\gamma(-\alpha^{-1}\beta{\Delta}^{-1})} \\
		&= \alpha(-\alpha^{-1}\beta{\Delta}^{-1} + \alpha^{-1}\beta{\Delta}^{-1})\\
		&= 0.\\
		\gamma\alpha^{\prime} + \delta\gamma^{\prime} &= \gamma(\alpha^{-1}-\alpha^{-1}\beta{\Delta}^{-1}(-\gamma\alpha^{-1})) + \delta({\Delta}^{-1}(-\gamma\alpha^{-1}))\\
		&= \gamma\alpha^{-1} + (-\gamma\alpha^{-1}\beta\Delta^{-1}+ \delta\Delta^{-1})(-\gamma\alpha^{-1})\\
		&= \gamma\alpha^{-1} + (-\gamma\alpha^{-1}\beta+ \delta)\Delta^{-1}(-\gamma\alpha^{-1})\\
		&= \gamma\alpha^{-1} + (-\gamma\alpha^{-1}), \;(\text{using the Equation (\ref{e1})}) \\
		&= 0.\\ 
		\gamma\beta^{\prime} + \delta\delta^{\prime} &= \gamma(-\alpha^{-1}\beta{\Delta}^{-1}) + \delta{\Delta}^{-1}\\
		&= (-\gamma\alpha^{-1}\beta + \delta){\Delta}^{-1}\\
		&= \Delta{\Delta}^{-1}, \;(\text{using the Equation (\ref{e1})})\\
		&= 1.
		\end{align*}
		Thus $\theta^{-1} = \begin{pmatrix}
		\alpha^{-1}-\alpha^{-1}\beta{\Delta_{K}}^{-1}(-\gamma\alpha^{-1}) & -\alpha^{-1}\beta{\Delta_{K}}^{-1}\\
		{\Delta_{K}}^{-1}(-\gamma\alpha^{-1}) & {\Delta_{K}}^{-1}
		\end{pmatrix}.$
		
		\vspace{.2cm}
		Since $\delta^{\prime} = \Delta^{-1}$ is invertible, $\det_{H}(\theta^{-1})$ is defined and $\det_{H}(\theta^{-1}) = \alpha^{\prime}- \beta^{\prime}{\delta^{\prime}}^{-1}\gamma^{\prime} = \alpha^{-1}-\alpha^{-1}\beta{\Delta}^{-1}(-\gamma\alpha^{-1}) - (-\alpha^{-1}\beta{\Delta}^{-1})\Delta({\Delta}^{-1}(-\gamma\alpha^{-1})) = \alpha^{-1}$. Moreover, $\theta^{-1}\in End(G)$, as the inverse of an endomorphism is also anendomorphism. This implies that ${\Delta_{K}}^{-1}\in End(K)$. Hence ${\Delta_{K}}$ is a group homomorphism
	\end{proof}
	\begin{theorem}\label{t32}
		Let $G = H\rtimes K$ and $\theta = \begin{pmatrix}
		\alpha & \beta\\ \gamma & \delta
		\end{pmatrix}\in End(G)$ such that $\alpha$ is invertible. Then $\theta$ is invertible if and only if $\det_{K}(\theta)$ is invertible.  
	\end{theorem}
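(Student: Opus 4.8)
The plan is to prove the two implications separately. The reverse implication is immediate: if $\det_K(\theta)=\Delta_K$ is invertible then, since $\alpha$ is invertible by hypothesis, Theorem~\ref{t31}$(i)$ exhibits an explicit element of $\mathcal{M}$ that is a two-sided inverse of $\theta$, so $\theta$ is invertible. Hence all the work is in the forward implication.

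Assume $\theta$ is invertible. By Theorem~\ref{t1} its inverse corresponds to a matrix $\theta^{-1}=\begin{pmatrix}\alpha' & \beta'\\ \gamma' & \delta'\end{pmatrix}\in\mathcal{M}$, and both $\theta\theta^{-1}$ and $\theta^{-1}\theta$ equal the identity of $\mathcal{M}$. The goal is to show that the map $\delta'\in End(K)$ is a two-sided inverse, under composition, of $\Delta_K=-\gamma\alpha^{-1}\beta+\delta$; this makes $\Delta_K$ a bijection of $K$, hence invertible. First I would write out, via the multiplication rule of $\mathcal{M}$, the coordinate equations encoded by $\theta^{-1}\theta=\begin{pmatrix}1&0\\0&1\end{pmatrix}$ and $\theta\theta^{-1}=\begin{pmatrix}1&0\\0&1\end{pmatrix}$. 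From the two lower equations of $\theta^{-1}\theta=I$, namely $\gamma'\alpha+\delta'\gamma=0$ and $\gamma'\beta+\delta'\delta=1$, I would eliminate $\gamma'$: the first gives $\gamma'=-\delta'\gamma\alpha^{-1}$ (here invertibility of $\alpha$ is used), and substitution into the second yields $-\delta'\gamma\alpha^{-1}\beta+\delta'\delta=1$. Since $\delta'$ is a group homomorphism it distributes over the (noncommutative) sum of maps, so the left side is exactly $\delta'\Delta_K$; thus $\delta'\Delta_K=1$, i.e., $\delta'\circ\Delta_K=\mathrm{id}_K$.

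For the opposite composition I would evaluate $\Delta_K(\delta'(k))=\bigl(\gamma\alpha^{-1}\beta\delta'(k)\bigr)^{-1}\,\delta\delta'(k)$ and simplify the two factors using the equations from $\theta\theta^{-1}=I$. Its lower-right equation $\gamma\beta'+\delta\delta'=1$ gives $\delta\delta'(k)=\gamma(\beta'(k))^{-1}k$. Its upper-right equation $\alpha\beta'+(\beta\delta')^{\gamma\beta'}=0$ gives $\gamma(\beta'(k))\,\beta(\delta'(k))\,\gamma(\beta'(k))^{-1}=\alpha(\beta'(k))^{-1}$; on the other hand, condition $(i)$ together with $\alpha(1)=1$ and $\gamma(1)=1$ yields the identity $\alpha(h^{-1})=\gamma(h)^{-1}\alpha(h)^{-1}\gamma(h)$, and comparing the two at $h=\beta'(k)$ gives $\beta(\delta'(k))=\alpha(\beta'(k)^{-1})$, whence $\alpha^{-1}\beta\delta'(k)=\beta'(k)^{-1}$ and $\gamma(\alpha^{-1}\beta\delta'(k))=\gamma(\beta'(k))^{-1}$. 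Plugging both simplifications into the expression for $\Delta_K(\delta'(k))$ collapses it to $k$, so $\Delta_K\delta'=1$ as well, and $\Delta_K$ is invertible with inverse $\delta'$. The step I expect to be the genuine obstacle is exactly this reverse composition $\Delta_K\circ\delta'=\mathrm{id}_K$: unlike $\delta'\circ\Delta_K$ it does not drop out of a single defining relation, and the computation only closes once one observes that condition $(i)$ forces $\alpha$ to send inverses to $\gamma$-twisted inverses, which is precisely what lets the awkward term $\alpha^{-1}\beta\delta'$ be rewritten in terms of $\beta'$. Some care is also needed with bookkeeping, since the addition of maps is not commutative and since only $\theta^{-1}\theta=I$ is used to obtain $\delta'\circ\Delta_K=\mathrm{id}_K$ while only $\theta\theta^{-1}=I$ is used to obtain $\Delta_K\circ\delta'=\mathrm{id}_K$.
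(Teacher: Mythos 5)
Your proof is correct, but it takes a genuinely different route from the paper's. For the forward direction the paper argues directly at the level of elements of $G$: it feeds the specific element $(\alpha^{-1}\beta(k))^{-1}k$ into $\theta$, observes that its image is exactly $\det_{K}(\theta)(k)$ sitting inside $K\leq G$, and then reads off injectivity of $\det_{K}(\theta)$ from injectivity of $\theta$ and surjectivity of $\det_{K}(\theta)$ from surjectivity of $\theta$ (the $H$-component of a preimage of $k'\in K$ is forced to be $(\alpha^{-1}\beta(k))^{-1}$). You instead work entirely inside the monoid $\mathcal{M}$: you extract the entries $\alpha',\beta',\gamma',\delta'$ of $\lambda(\theta^{-1})$ and show $\delta'$ is a two-sided compositional inverse of $\Delta_{K}$, using $\theta^{-1}\theta=I$ for $\delta'\Delta_{K}=1$ and $\theta\theta^{-1}=I$ together with the twisted-inverse identity $\alpha(h^{-1})=\gamma(h)^{-1}\alpha(h)^{-1}\gamma(h)$ (a consequence of condition $(i)$) for $\Delta_{K}\delta'=1$; I checked the bookkeeping with the noncommutative sum and it closes. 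The paper's argument is the more elementary one --- it needs only that $\theta$ is a bijection of the set $G$ and never touches the multiplication formula of $\mathcal{M}$ --- whereas yours leans on the full monoid isomorphism of Theorem \ref{t1} but buys the explicit formula $\Delta_{K}^{-1}=\delta'$, i.e.\ that the inverse of the determinant is the lower-right entry of $\theta^{-1}$, which is exactly the identity recorded in Theorem \ref{t31} and reused in Theorem \ref{t34}. Both approaches use invertibility of $\alpha$ in the same essential way (to solve for the $H$-component, respectively for $\gamma'$), and both defer the converse implication to Theorem \ref{t31}.
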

	\begin{proof}
		Let $\alpha$ be invertible. First assume that $\theta$ is invertible. Now, let $k, k^{\prime}\in K$, such that $\det_{K}(\theta)(k) = \det_{K}(\theta)(k^{\prime})$. Then
		\begin{equation}\label{e7}
		(\gamma\alpha^{-1}\beta(k))^{-1}\delta(k) = (\gamma\alpha^{-1}\beta(k^{\prime}))^{-1}\delta(k^{\prime})
		\end{equation}  
		Now,
		\begin{align*}
		\begin{pmatrix}
		\alpha & \beta \\ \gamma & \delta
		\end{pmatrix}\begin{pmatrix}
		(\alpha^{-1}\beta(k))^{-1}\\ k
		\end{pmatrix} &= \begin{pmatrix}
		\alpha((\alpha^{-1}\beta(k))^{-1})\beta(k)^{\gamma((\alpha^{-1}\beta(k))^{-1})}\\ \gamma((\alpha^{-1}\beta(k))^{-1})\delta(k)
		\end{pmatrix}\\
		&= \begin{pmatrix}
		(\alpha(\alpha^{-1}\beta(k))^{-1})^{\gamma((\alpha^{-1}\beta(k))^{-1})}\beta(k)^{\gamma((\alpha^{-1}\beta(k))^{-1})}\\ \gamma((\alpha^{-1}\beta(k))^{-1})\delta(k)
		\end{pmatrix}\\
		&= \begin{pmatrix}
		(\beta(k)^{-1})^{\gamma((\alpha^{-1}\beta(k))^{-1})}\beta(k)^{\gamma((\alpha^{-1}\beta(k))^{-1})}\\ \gamma((\alpha^{-1}\beta(k))^{-1})\delta(k)
		\end{pmatrix}\\
		&= \begin{pmatrix}
		1\\ \gamma((\alpha^{-1}\beta(k))^{-1})\delta(k)
		\end{pmatrix}.
		\end{align*}
		Similarly, we get 
		\begin{equation*}
		\begin{pmatrix}
		\alpha & \beta \\ \gamma & \delta
		\end{pmatrix}\begin{pmatrix}
		(\alpha^{-1}\beta(k^{\prime}))^{-1}\\ k^{\prime}
		\end{pmatrix}  = \begin{pmatrix}
		1\\ \gamma((\alpha^{-1}\beta(k^{\prime}))^{-1})\delta(k^{\prime})
		\end{pmatrix}.
		\end{equation*}
		Using the Equation (\ref{e7}), we get
		\[\begin{pmatrix}
		\alpha & \beta \\ \gamma & \delta
		\end{pmatrix}\begin{pmatrix}
		(\alpha^{-1}\beta(k))^{-1}\\ k
		\end{pmatrix} = \begin{pmatrix}
		\alpha & \beta \\ \gamma & \delta
		\end{pmatrix}\begin{pmatrix}
		(\alpha^{-1}\beta(k^{\prime}))^{-1}\\ k^{\prime}
		\end{pmatrix}. \]
		Since $\theta$ is invertible, $k = k^{\prime}$. This shows that $\det_{K}(\theta)$ is injective.
		
		Now, let $k^{\prime}\in K$ be any element. Since $\theta$ is invertible, we have $hk\in G$ such that $\theta(hk) = k^{\prime}$. Then $\alpha(h)\beta(k)^{\gamma(h)}\gamma(h)\delta(k) = k^{\prime}$. Using the uniqueness of representation, we have $\alpha(h)\beta(k)^{\gamma(h)} = 1$ and $\gamma(h)\delta(k) = k^{\prime}$. This implies that $\alpha(h\alpha^{-1}\beta(k)) = 1$. Since $\alpha$ is invertible, $h\alpha^{-1}\beta(k) = 1$ and so, $h = (\alpha^{-1}\beta(k))^{-1}$. Using $\gamma(h)\delta(k) = k^{\prime}$, we get $(-\gamma\alpha^{-1}\beta + \delta)(k) = k^{\prime}$. Thus $\det_{K}(\theta)$ is surjective. Hence $\det_{K}(\theta)$ is invertible. The converse holds using the Theorem \ref{t31}.
	\end{proof}
	\begin{theorem}\label{t33}
		Let $G = H\rtimes K$ and $\theta = \begin{pmatrix}
		\alpha & \beta\\ \gamma & \delta
		\end{pmatrix}\in End(G)$ such that $\delta$ is invertible. Then $\theta$ is invertible if and only if $\det_{H}(\theta)$ is invertible.  
	\end{theorem}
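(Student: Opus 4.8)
The plan is to mirror the proof of Theorem \ref{t32}, interchanging the roles of $H$ and $K$ (hence of $\alpha$ and $\delta$, and of $\gamma$ and $\beta$). The backward implication is already available: if $\det_{H}(\theta)$ is invertible then, since $\delta$ is invertible by hypothesis, Theorem \ref{t31}$(ii)$ produces an explicit inverse matrix for $\theta$, so $\theta$ is invertible. Thus the only thing left is the forward direction: assuming $\theta$ invertible, show that $\det_{H}(\theta)\colon H\to H$ is a bijection.

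The key observation I would establish first is the identity
\[
\begin{pmatrix}\alpha & \beta\\ \gamma & \delta\end{pmatrix}\begin{pmatrix}h\\ (\delta^{-1}\gamma(h))^{-1}\end{pmatrix} = \begin{pmatrix}\det_{H}(\theta)(h)\\ 1\end{pmatrix}
\]
for every $h\in H$, the $H$--$K$ dual of the identity exploited in Theorem \ref{t32}. To prove it, set $k=(\delta^{-1}\gamma(h))^{-1}$ and expand $\theta\big(hk\big)=\alpha(h)\beta(k)^{\gamma(h)}\gamma(h)\delta(k)$. Since $\delta$ is a homomorphism and $\delta\delta^{-1}=1$, we get $\delta(k)=\gamma(h)^{-1}$, so the $K$-component $\gamma(h)\delta(k)$ collapses to $1$, leaving the $H$-component $\alpha(h)\,\beta\big((\delta^{-1}\gamma(h))^{-1}\big)^{\gamma(h)}$. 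The one delicate point is to identify this with $\alpha(h)\,\beta(\delta^{-1}\gamma(h))^{-1}=\det_{H}(\theta)(h)$: applying the crossed-homomorphism law $(ii)$ to $k$ and $k^{-1}$ gives $\beta(k^{-1})=(\beta(k)^{-1})^{\delta(k)^{-1}}$, and because $\delta(\delta^{-1}\gamma(h))=\gamma(h)$ the inner twist is by $\gamma(h)^{-1}$, which is exactly cancelled by the outer conjugation by $\gamma(h)$; hence $\beta(k^{-1})^{\gamma(h)}=\beta(k)^{-1}$, as needed.

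With this identity in hand the rest is formal. For injectivity of $\det_{H}(\theta)$: if $\det_{H}(\theta)(h)=\det_{H}(\theta)(h')$, then $\theta$ sends $h(\delta^{-1}\gamma(h))^{-1}$ and $h'(\delta^{-1}\gamma(h'))^{-1}$ to the same element of $G$, so injectivity of $\theta$ forces $h=h'$. For surjectivity: given $h'\in H$, choose $hk\in G$ with $\theta(hk)=h'$ (possible since $\theta$ is onto); writing $\theta(hk)=\alpha(h)\beta(k)^{\gamma(h)}\gamma(h)\delta(k)$ and using uniqueness of the $HK$-decomposition yields $\gamma(h)\delta(k)=1$ and $\alpha(h)\beta(k)^{\gamma(h)}=h'$; the first equation gives $k=(\delta^{-1}\gamma(h))^{-1}$, and then the same simplification as above rewrites the second as $\det_{H}(\theta)(h)=h'$. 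Hence $\det_{H}(\theta)$ is a bijection, i.e. invertible. The main obstacle is simply keeping the crossed-homomorphism bookkeeping for $\beta$ straight — the $\delta(k)$-twist in condition $(ii)$ is precisely what makes the two conjugations cancel — together with consistent use of the normal-form move $\gamma(h)\beta(k)=\beta(k)^{\gamma(h)}\gamma(h)$; everything else is routine.
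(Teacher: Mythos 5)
Your proof is correct and is exactly what the paper intends: the paper's own ``proof'' of this theorem is the single line ``on the lines of Theorem \ref{t32},'' and you have carried out precisely that dualization, including the one genuinely delicate point that the $\delta(k)$-twist in condition $(ii)$ cancels the outer conjugation by $\gamma(h)$ so that $\alpha(h)\beta\bigl((\delta^{-1}\gamma(h))^{-1}\bigr)^{\gamma(h)}=\det_{H}(\theta)(h)$. (There is a harmless slip of an inverse in your intermediate line ``$\beta(k^{-1})^{\gamma(h)}=\beta(k)^{-1}$'' --- from $\beta(k^{-1})=(\beta(k)^{-1})^{\delta(k)^{-1}}$ and $\delta(k)^{-1}=\gamma(h)$ one gets $\beta(k^{-1})^{-1}=\beta(k)^{\gamma(h)}$, which is the identity you actually need --- but this does not affect the argument.)
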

	\begin{proof}
		The proof is on the lines of the proof of the Theorem \ref{t32}.
	\end{proof}

	Now, we define a subset of $End(G)$ contatining all the automorphisms of the group $G$, where $G$ is the semidirect product of the groups $H$ and $K$. Consider the set
	\begin{equation*}
	\mathcal{A} = \left\{\begin{pmatrix}
	\alpha & \beta\\ \gamma & \delta
	\end{pmatrix}
	\mid \begin{matrix}
	\alpha \in Map(H, H), & \beta\in Map(K, H)\\ \gamma\in Hom(H, K), & \text{and}\;\delta \in End(K)
	\end{matrix} \right\},
	\end{equation*}
	where the maps $\alpha$, $\beta$, $\gamma$ and $\delta$ satisfy the following
	\begin{itemize}
		\item[$(A_{1})$] $\alpha(hh^{\prime}) = \alpha(h)\alpha(h^{\prime})^{\gamma(h)}$,
		\item[$(A_2)$] $\beta(kk^{\prime}) = \beta(k)\beta(k^{\prime})^{\delta(k)}$,
		\item[$(A_3)$] $\gamma(h^{k})\delta(k) = \delta(k)\gamma(h)$,
		\item[$(A_{4})$] $\alpha(h^{k})\beta(k)^{\gamma(h^{k})} = \beta(k)\alpha(h)^{\delta(k)}$,
		\item[$(A_{5})$] for any $h^{\prime}k^{\prime}\in G$, there exists a unique $h\in H$ and $k\in K$ such that $h^{\prime} = \alpha(h)\beta(k)^{\gamma(h)}$ and $k^{\prime} = \gamma(h)\delta(k)$,
	\end{itemize}
	for all $h,h^{\prime}\in H$ and $k, k^{\prime}\in K$.
	
	Clearly, $\mathcal{A}$ is a submonoid of $\mathcal{M}$. Note that a one to one correspondence between $Aut(G)$ and $\mathcal{A}$ is already dicussed in \cite{bc} and \cite{hsu}. Now, we consider some important subsets of $\mathcal{A}$. Let
	\begin{align*}
	A &= \left\{\begin{pmatrix}
	\alpha & 0 \\ 0 & 1
	\end{pmatrix} \mid \alpha \in Aut(H), \alpha(h^{k}) = \alpha(h)^{k}\; \text{for all}\; h\in H \; \text{and}\; k\in K \right\},\\
	B &= \left\{\begin{pmatrix}
	1 & \beta \\ 0 & 1
	\end{pmatrix} \mid \beta \in CHom(K, Z(H)) \right\},\\
	C &= \left\{\begin{pmatrix}
	1 & 0 \\ \gamma & 1
	\end{pmatrix} \mid \gamma \in Hom(H, K), \gamma(h)\in C_{K}(H), \gamma(h^{k}) = \gamma(h)^{k}\; \text{for all}\; h\in H \; \text{and}\; k\in K \right\},\\
	D &= \left\{\begin{pmatrix}
	1 & 0 \\ 0 & \delta
	\end{pmatrix} \mid \delta \in Aut(K), k^{-1}\delta(k)\in C_{K}(H)\; \text{for all}\; k\in K \right\}.
	\end{align*}
	One can easily note that $A$, $B$ and $D$ are the subgroups of the group $\mathcal{A}$. However, $C$ need not be a subgroup of $\mathcal{A}$. It is easy to check that $A$ and $D$ normalizes $B$ and $C$ both.
	
	\begin{lemma}\label{le1}
		Let $\begin{pmatrix}
		1 & \beta\\ \gamma & 1
		\end{pmatrix}\mathcal{A}$. Then the following hold
		\begin{itemize}
			\item[$(i)$] $\begin{pmatrix}
			1-\beta\gamma & 0\\ 0 & 1
			\end{pmatrix}\in A$, 
			\item[$(ii)$] $\begin{pmatrix}
			1 & (1-\beta\gamma)^{-1}\beta \\ 0 & 1
			\end{pmatrix}\in B$.
		\end{itemize}
	\end{lemma}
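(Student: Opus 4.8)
The plan is to first record what membership of $\theta=\begin{pmatrix}1&\beta\\\gamma&1\end{pmatrix}$ in $\mathcal{A}$ says about $\beta$ and $\gamma$. Putting $\alpha=\delta=1$ in $(A_1)$ gives $hh'=h\,(h')^{\gamma(h)}$ for all $h,h'\in H$, so $\gamma(h)\in C_K(H)$ for every $h$; condition $(A_2)$ becomes $\beta(kk')=\beta(k)\beta(k')^{k}$, i.e.\ $\beta$ is a crossed homomorphism; condition $(A_3)$ becomes $\gamma(h^k)=\gamma(h)^{k}$; and, since $\gamma(h^k)\in C_K(H)$ centralizes $H$, condition $(A_4)$ collapses to $h^k\beta(k)=\beta(k)h^k$ for all $h,k$, whence $\beta(k)\in Z(H)$ for every $k$. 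Using $\gamma(h)\in C_K(H)$ once more, the two equations of $(A_5)$ simplify to $h'=h\,\beta(k)$ and $k'=\gamma(h)k$.

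For part $(i)$ I would verify the three conditions defining $A$ for the map $1-\beta\gamma$, which by the conventions of Section~\ref{s2} sends $h\mapsto h\,\beta(\gamma(h))^{-1}$. That $1-\beta\gamma$ is a homomorphism and a bijection can be obtained in two ways: either directly --- the homomorphism property from $\gamma\in Hom(H,K)$, the crossed-homomorphism identity for $\beta$, and the freedom to commute the central values $\beta(\gamma(h))$ past everything; and bijectivity from $(A_5)$ with $k'=1$, which forces $k=\gamma(h)^{-1}$ and, after observing $\beta(\gamma(h)^{-1})=\beta(\gamma(h))^{-1}$, says precisely that each $h'\in H$ equals $(1-\beta\gamma)(h)$ for a unique $h$ --- or at once from Theorem~\ref{t33} and Theorem~\ref{t31}$(ii)$, because $\delta=1$ is invertible and $\theta$ is an automorphism, so $\det_H(\theta)=1-\beta\gamma$ is an invertible group homomorphism. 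The remaining condition $(1-\beta\gamma)(h^k)=(1-\beta\gamma)(h)^{k}$ reduces, via $(A_3)$, to the identity $\beta(\gamma(h)^{k})=\beta(\gamma(h))^{k}$; I expect this to be the main computational obstacle, and would establish it by expanding $\beta\bigl(k\,\gamma(h)\,k^{-1}\bigr)$ with the crossed-homomorphism rule and then cancelling the stray factors using $\beta(k)\in Z(H)$, $\gamma(h)\in C_K(H)$, and $\gamma(h)^{k}=\gamma(h^k)\in C_K(H)$.

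For part $(ii)$ I would set $\eta=(1-\beta\gamma)^{-1}\in Aut(H)$, available from part $(i)$. The first step is a transfer argument: since $1-\beta\gamma$ is bijective and satisfies $(1-\beta\gamma)(h^k)=(1-\beta\gamma)(h)^{k}$, applying $\eta$ to both sides (with $h$ replaced by $\eta(x)$) gives $\eta(x^k)=\eta(x)^{k}$ for all $x\in H$, $k\in K$. Then $(1-\beta\gamma)^{-1}\beta=\eta\circ\beta$ takes values in $Z(H)$ because $\beta$ does and $\eta$ preserves $Z(H)$, and it is a crossed homomorphism because $\eta\bigl(\beta(k)\beta(k')^{k}\bigr)=\eta(\beta(k))\,\eta(\beta(k'))^{k}$, using that $\eta$ is a homomorphism and $K$-equivariant. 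Hence $(1-\beta\gamma)^{-1}\beta\in CHom(K,Z(H))$, which is exactly the condition for $\begin{pmatrix}1&(1-\beta\gamma)^{-1}\beta\\0&1\end{pmatrix}\in B$.
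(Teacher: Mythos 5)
Your proposal is correct and follows essentially the same route as the paper: bijectivity of $1-\beta\gamma$ from $(A_5)$, the homomorphism property from $\gamma(H)\subseteq C_K(H)$, the key identity $\beta(\gamma(h)^k)=\beta(\gamma(h))^k$ via the crossed-homomorphism rule and centrality of $\beta(K)$, and part $(ii)$ by pushing $\beta$ through the $K$-equivariant automorphism $(1-\beta\gamma)^{-1}$. If anything you are slightly more careful than the paper, which invokes $\beta(K)\subseteq Z(H)$ without deriving it from $(A_4)$ as you do, and which omits the check that $(1-\beta\gamma)^{-1}\beta$ lands in $Z(H)$.
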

	\begin{proof}
		\begin{itemize}
			\item[$(i)$]   	Let $\begin{pmatrix}
			1 & -\beta\\ 0 & 1
			\end{pmatrix}, \begin{pmatrix}
			1 & 0\\ \gamma & 1
			\end{pmatrix}\in \mathcal{A}$. Then 
			\begin{equation}\label{e2}
			\begin{pmatrix}
			1 & -\beta\\ 0 & 1
			\end{pmatrix} \begin{pmatrix}
			1 & 0\\ \gamma & 1
			\end{pmatrix} = \begin{pmatrix}
			1-\beta\gamma & -\beta \\ \gamma & 1
			\end{pmatrix}\in \mathcal{A}.
			\end{equation}
			Using $(A_{5})$, we get $1-\beta\gamma$ is a bijection. Now, we prove that $1-\beta\gamma$ is a group homomorphism. Let $h, h^{\prime}\in H$. Then
			\begin{align*}
			\beta\gamma(hh^{\prime}) &= \beta(\gamma(h)\gamma(h^{\prime}))\\
			&= \beta(\gamma(h))\beta(\gamma(h^{\prime}))^{\gamma(h)}\\
			&= \beta(\gamma(h))\beta(\gamma(h^{\prime})), \; (\text{using $(A_{3})$}).
			\end{align*}
			Thus $\beta\gamma$ is a group homomorphism. This implies that $1-\beta\gamma$ is a group homomorphism. Therefore, $1-\beta\gamma\in Aut(H)$. Now, for all $h\in H$ and $k\in K$, we have
			\begin{align*}
			\beta\gamma(h^{k}) &= \beta(\gamma(h^{k}))\\
			&= \beta(\gamma(h)^{k}), \; (\text{using $(A_{3})$})\\
			&= \beta(k\gamma(h)k^{-1})\\
			&= \beta(k\gamma(h))\beta(k^{-1})^{k\gamma(h)}\\
			&= \beta(k)\beta(\gamma(h))^{k}\beta(k^{-1})^{k\gamma(h)}\\
			&= \beta(\gamma(h))^{k}\beta(k){(\beta(k^{-1})^{\gamma(h)})}^{k}, \; (\text{because $\beta(K)\subseteq Z(H)$})\\
			&= \beta(\gamma(h))^{k}\beta(k)\beta(k^{-1})^{k}, \; (\text{using $(A_{1})$})\\
			&= \beta(\gamma(h))^{k}\beta(kk^{-1})\\
			&= \beta(\gamma(h))^{k}.
			\end{align*} 
			Therefore $(1-\beta\gamma(h^{k})) = (1-\beta\gamma(h))^{k}$ for all $h\in H$ and $k\in K$. Thus $\begin{pmatrix}
			1-\beta\gamma & 0\\ 0 & 1
			\end{pmatrix}\in A$.
			\item[$(ii)$] Using the part $(i)$, we get $\begin{pmatrix}
			(1-\beta\gamma)^{-1} & 0\\ 0 & 1
			\end{pmatrix}\in A$. Let $k, k^{\prime}\in K$. Then
			\begin{align*}
			(1-\beta\gamma)^{-1}\beta(kk^{\prime}) &= (1-\beta\gamma)^{-1}(\beta(k)\beta(k^{\prime})^{k})\\
			&= (1-\beta\gamma)^{-1}(\beta(k))(1-\beta\gamma)^{-1}(\beta(k^{\prime})^{k})\\
			&= ((1-\beta\gamma)^{-1}\beta)(k)((1-\beta\gamma)^{-1}\beta)(k^{\prime})^{k}.
			\end{align*}
			Therefore, $(1-\beta\gamma)^{-1}\beta\in CHom(K, H)$. Hence $\begin{pmatrix}
			1 & (1-\beta\gamma)^{-1}\beta \\ 0 & 1
			\end{pmatrix}\in B$.
		\end{itemize}
	\end{proof}
	\begin{theorem}
		Let $G= H\rtimes K$ and $\mathcal{A}$ be defined as above. Then $\mathcal{A} = ABCD$.
	\end{theorem}
	\begin{proof}
Let $\begin{pmatrix}
1 & \beta\\ \gamma & 1
\end{pmatrix} \in \mathcal{A}$. Then using the Lemma \ref{le1}, we get
		\begin{equation}\label{e3}
		\begin{pmatrix}
		1 & \beta\\ \gamma & 1
		\end{pmatrix} = \begin{pmatrix}
		1-\beta\gamma & 0\\ 0 & 1
		\end{pmatrix}\begin{pmatrix}
		1 & (1-\beta\gamma)^{-1}\beta\\ 0 & 1
		\end{pmatrix}\begin{pmatrix}
		1 & 0\\ \gamma & 1
		\end{pmatrix}\in ABC.
		\end{equation}
		Now, let $\begin{pmatrix}
		\alpha & \beta \\ \gamma & \delta
		\end{pmatrix} \in \mathcal{A}$, where the maps satisfy the conditions $(A_{1}) - (A_{5})$. Then 
		\begin{equation}\label{e4}
		\begin{pmatrix}
		\alpha & \beta\\ \gamma & \delta
		\end{pmatrix} = \begin{pmatrix}
		\alpha & 0\\ 0 & 1
		\end{pmatrix}\begin{pmatrix}
		1 & \alpha^{-1}\beta\delta^{-1}\\ \gamma & 1
		\end{pmatrix}\begin{pmatrix}
		1 & 0\\ 0 & \delta
		\end{pmatrix}.
		\end{equation}
		Now, let $\begin{pmatrix}
		\alpha^{-1} & 0\\ 0 & 1
		\end{pmatrix}\in A$ and $\begin{pmatrix}
		1 & 0\\ 0 & \delta^{-1}
		\end{pmatrix}\in D$. Then for all $k,k^{\prime}\in K$, we have
		\begin{align*}
		\alpha^{-1}\beta\delta^{-1}(kk^{\prime}) &= \alpha^{-1}\beta(\delta^{-1}(k)\delta^{-1}(k^{\prime}))\\
		&= \alpha^{-1}(\beta(\delta^{-1}(k))\beta(\delta^{-1}(k^{\prime}))^{\delta(\delta^{-1}(k))})\\
		&= \alpha^{-1}(\beta(\delta^{-1}(k)))\alpha^{-1}(\beta(\delta^{-1}(k^{\prime}))^{k})\\
		&= \alpha^{-1}\beta\delta^{-1}(k)\alpha^{-1}\beta\delta^{-1}(k^{\prime})^{k}.
		\end{align*}
		Therefore, $\begin{pmatrix}
		1 & \alpha^{-1}\beta\delta^{-1}\\ 0 & 1
		\end{pmatrix}\in B$. By the Equations (\ref{e3}) and (\ref{e4}), we get
		\[\begin{pmatrix}
		\alpha & \beta\\ \gamma & \delta
		\end{pmatrix} = \begin{pmatrix}
		\alpha & 0\\ 0 & 1
		\end{pmatrix}\begin{pmatrix}
		1 & \alpha^{-1}\beta\delta^{-1}\\ \gamma & 1
		\end{pmatrix}\begin{pmatrix}
		1 & 0\\ 0 & \delta
		\end{pmatrix}\in A(ABC)D \subseteq ABCD. \]
		Thus $\mathcal{A} \subseteq ABCD$. Also, it is evident that $ABCD \subseteq \mathcal{A}$. Hence $\mathcal{A} = ABCD$.

	\end{proof}
	
	\begin{theorem}\label{t34}
		Let $G = H\rtimes K$ and $\theta = \begin{pmatrix}
		\alpha & \beta \\ \gamma & \delta
		\end{pmatrix}\in \mathcal{A}$. Then $\Delta_{H} = \det_{H}(\theta)$ is invertible if and only if $\Delta_{K}= \det_{K}(\theta)$ is invertible. Moreover, 
		\begin{align}
		{\Delta_{H}}^{-1} &= \alpha^{-1} - \alpha^{-1}\beta{\Delta_{K}}^{-1}(-\gamma\alpha^{-1}) \label{e5}\\
		\text{and}\hspace{.5cm} {\Delta_{K}}^{-1} &= \delta^{-1}\gamma{\Delta_{H}}^{-1}(-\beta\delta^{-1})+ \delta^{-1}. \label{e6}
		\end{align}
	\end{theorem}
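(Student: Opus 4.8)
The plan is to deduce the statement from Theorems \ref{t31}, \ref{t32} and \ref{t33} together with the uniqueness of inverses in $\mathcal{M}$. The first observation is that $\theta\in\mathcal{A}$ means $\theta$ is an automorphism of $G$, hence an invertible element of $\mathcal{M}$, and that writing $\Delta_{H}=\det_{H}(\theta)$ and $\Delta_{K}=\det_{K}(\theta)$ already forces $\delta$ and $\alpha$, respectively, to be invertible. So I would work under the standing hypotheses that $\alpha$, $\delta$ and $\theta$ are all invertible.

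For the equivalence: if $\Delta_{K}$ is invertible, then since $\theta$ and $\delta$ are invertible, the ``only if'' direction of Theorem \ref{t33} gives that $\Delta_{H}$ is invertible; conversely, if $\Delta_{H}$ is invertible, then since $\theta$ and $\alpha$ are invertible, Theorem \ref{t32} gives that $\Delta_{K}$ is invertible. In fact, because $\theta$ is already invertible, Theorems \ref{t32} and \ref{t33} show that both $\Delta_{H}$ and $\Delta_{K}$ are invertible whenever they are defined, so the equivalence is essentially automatic once the hypotheses are unwound.

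For the two identities I would apply Theorem \ref{t31} twice to the single matrix $\theta^{-1}$. Part $(i)$ applies, since $\alpha$ and $\Delta_{K}$ are invertible, and expresses $\theta^{-1}$ through $\alpha^{-1},\beta,\gamma,\Delta_{K}^{-1}$; part $(ii)$ applies, since $\delta$ and $\Delta_{H}$ are invertible, and expresses $\theta^{-1}$ through $\delta^{-1},\beta,\gamma,\Delta_{H}^{-1}$. Since the inverse of $\theta$ in $\mathcal{M}$ is unique, these two $2\times 2$ matrices coincide entrywise; comparing the $(1,1)$ entries yields $\Delta_{H}^{-1}=\alpha^{-1}-\alpha^{-1}\beta\Delta_{K}^{-1}(-\gamma\alpha^{-1})$, which is (\ref{e5}), and comparing the $(2,2)$ entries yields $\Delta_{K}^{-1}=\delta^{-1}\gamma\Delta_{H}^{-1}(-\beta\delta^{-1})+\delta^{-1}$, which is (\ref{e6}).

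I do not expect a genuine obstacle here: the analytic content (the closed forms for $\theta^{-1}$, and the injectivity and surjectivity arguments behind Theorems \ref{t32} and \ref{t33}) is already established earlier, so what remains is just reading off matrix entries. The one point that needs care is the bookkeeping of invertibility hypotheses---in particular, checking that membership in $\mathcal{A}$ indeed makes $\theta$ invertible, so that the ``only if'' halves of Theorems \ref{t32} and \ref{t33} are available without any additional assumption, and that having both determinants defined is precisely the assumption needed to invoke both parts of Theorem \ref{t31}.
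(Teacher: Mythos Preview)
Your argument is correct, but it proceeds quite differently from the paper. The paper does not invoke Theorems~\ref{t31}--\ref{t33} at all here: it proves directly that if $\Delta_{K}$ is invertible then $\Delta_{H}$ is a bijection, by an explicit injectivity/surjectivity check. For injectivity it takes $h$ with $\Delta_{H}(h)=1$, shows $\Delta_{K}(\delta^{-1}\gamma(h))=1$, and uses invertibility of $\Delta_{K}$ and $\alpha$ to force $h=1$. For surjectivity it takes $h\in H$, chooses $k$ with $\Delta_{K}(k)=(\gamma\alpha^{-1}(h))^{-1}$, and verifies by a long elementwise computation that $\Delta_{H}\bigl(\alpha^{-1}(h)(\alpha^{-1}\beta(k))^{-1}\bigr)=h$; this same computation then yields (\ref{e5}), and the converse together with (\ref{e6}) are declared symmetric.

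Your route---observe that $\theta\in\mathcal{A}$ is invertible, apply Theorems~\ref{t32} and \ref{t33} to conclude both determinants are invertible, then compare the $(1,1)$ and $(2,2)$ entries of the two expressions for $\theta^{-1}$ from Theorem~\ref{t31}---is considerably shorter and makes clear that Theorem~\ref{t34} is essentially a corollary of what precedes it; indeed, the paper uses exactly this entry-comparison idea one theorem later. The trade-off is that the paper's hands-on computation never uses condition $(A_{5})$: it would go through for any $\theta\in\mathcal{M}$ with $\alpha$ and $\delta$ invertible, whereas your argument genuinely needs $\theta$ to be invertible in order to appeal to the ``only if'' halves of Theorems~\ref{t32} and \ref{t33} and to the uniqueness of $\theta^{-1}$.
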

	\begin{proof}
		Let $\Delta_{K}$ be invertible. Then we prove that $\Delta_{H}$ is invertible. For this, let $h\in H$ such that $\Delta_{H}(h) = 1$. This implies that $\alpha(h) = \beta{\delta}^{-1}\gamma(h)$. Now
		\begin{align*}
		\Delta_{K}(\delta^{-1}\gamma(h)) &= (-\gamma\alpha^{-1}\beta + \delta)(\delta^{-1}\gamma(h))\\
		&= (\gamma\alpha^{-1}(\beta\delta^{-1}\gamma(h)))^{-1}\gamma(h)\\
		&= (\gamma\alpha^{-1}(\alpha(h)))^{-1}\gamma(h)\\
		&= (\gamma(h))^{-1}\gamma(h)\\
		&= 1.
		\end{align*}
		Since $\Delta_{K}$ is invertible, $\delta^{-1}\gamma(h) = 1$. This implies that $\alpha(h) = \beta({\delta}^{-1}\gamma(h)) = \beta(1) = 1$. Since $\alpha$ is invertible, $h = 1$. Thus $\Delta_{H}$ is injective.
		
		Now, let $h\in H$ and $k\in K$ such that $\Delta_{K}(k) = (\gamma\alpha^{-1}(h))^{-1}$. Then $(\gamma\alpha^{-1}\beta(k))^{-1} = (\gamma\alpha^{-1}(h))^{-1}\delta(k)^{-1}$. Now
		\begin{align*}
		\Delta_{H}(\alpha^{-1}(h)({\alpha^{-1}\beta(k)})^{-1}) &= (\alpha - \beta\delta^{-1}\gamma)(\alpha^{-1}(h)({\alpha^{-1}\beta(k)})^{-1})\\ &= \alpha(\alpha^{-1}(h)({\alpha^{-1}\beta(k)})^{-1})({\beta\delta^{-1}(\gamma\alpha^{-1}(h)(\gamma{\alpha^{-1}\beta(k)})^{-1})})^{-1}\\
		&= h(\alpha(({\alpha^{-1}\beta(k)})^{-1}))^{\gamma\alpha^{-1}(h)}(\beta\delta^{-1}(\delta(k^{-1})))^{-1}\\
		&= h(\alpha(({\alpha^{-1}\beta(k)})^{-1}))^{\gamma\alpha^{-1}(h)}(\beta(k^{-1}))^{-1}\\
		&= h\gamma\alpha^{-1}(h)\alpha(({\alpha^{-1}\beta(k)})^{-1}){\gamma\alpha^{-1}(h)}^{-1}(\beta(k^{-1}))^{-1}\\
		&= h\gamma\alpha^{-1}(h)((\alpha({\alpha^{-1}\beta(k)}))^{-1})^{\gamma{\alpha^{-1}\beta(k)}^{-1}}{\gamma\alpha^{-1}(h)}^{-1}(\beta(k^{-1}))^{-1}\\
		&= h\gamma\alpha^{-1}(h)\gamma{\alpha^{-1}\beta(k)}^{-1}(\beta(k))^{-1}\gamma{\alpha^{-1}\beta(k)}{\gamma\alpha^{-1}(h)}^{-1}(\beta(k^{-1}))^{-1}\\
		&= h\delta(k)^{-1}\beta(k)^{-1}\delta(k)(\beta(k^{-1}))^{-1}\\
		&= h(\beta(k^{-1})\beta(k)^{\delta(k^{-1})})^{-1}\\
		&= h(\beta(k^{-1}k))^{-1}\\
		&= h.
		\end{align*}
		Thus for $h\in H$, there exits $\alpha^{-1}(h)({\alpha^{-1}\beta(k)})^{-1}\in H$ such that $\Delta_{H}(\alpha^{-1}(h)({\alpha^{-1}\beta(k)})^{-1}) = h$.
		Therefore, $\Delta_{H}$ is a bijection and so, $\Delta_{H}$ is invertible.  
		
		Further, $\Delta_{H}(\alpha^{-1}(h)({\alpha^{-1}\beta(k)})^{-1}) = h$ gives that ${\Delta_{H}}^{-1}(h) = (\alpha^{-1} - \alpha^{-1}\beta{\Delta_{K}}^{-1}(-\gamma\alpha^{-1}))(h)$. Hence ${\Delta_{H}}^{-1} = \alpha^{-1}-\alpha^{-1}\beta{\Delta_{K}}^{-1}(-\gamma\alpha^{-1})$. Using the similar arguments, one can easily prove the converse part and the Equation (\ref{e6}).
	\end{proof}

	\begin{theorem}
		Let $G = H\rtimes K$ and $\theta= \begin{pmatrix}
		\alpha & \beta\\ \gamma & \delta
		\end{pmatrix}\in \mathcal{A}$. If $\Delta = \det(\theta)$ is invertible, then 
		\[\theta^{-1} = \begin{pmatrix}
		\Delta^{-1} & \Delta^{-1}(-\beta\delta^{-1})\\
		-\delta^{-1}\gamma\Delta^{-1}& \delta^{-1}\gamma\Delta^{-1}(-\beta\delta^{-1}) + \delta^{-1}
		\end{pmatrix}.\]
		Moreover, $\theta^{-1}\in \mathcal{A}$ and $\det(\theta^{-1}) = \alpha^{-1}$.
	\end{theorem}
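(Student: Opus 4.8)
Since $\det(\theta)$ is required to be invertible, we read $\det=\det_H$, so $\delta$ is invertible and $\Delta=\Delta_H=\det_H(\theta)$ is invertible. Because $\theta\in\mathcal A$, Theorem~\ref{t34} applies and shows that $\Delta_K=\det_K(\theta)$ is invertible as well (invoking Theorem~\ref{t34} presupposes $\alpha$ invertible, so this is already part of the setting). Theorem~\ref{t31}(ii), applied with these hypotheses, then immediately gives that $\theta$ is invertible and that $\theta^{-1}$ is exactly the matrix displayed in the statement. So what still has to be established is that $\theta^{-1}\in\mathcal A$ and that $\det(\theta^{-1})=\alpha^{-1}$.

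For $\theta^{-1}\in\mathcal A$ I would argue structurally rather than verify $(A_1)$--$(A_5)$ by hand: via the correspondence between $\mathcal A$ and $Aut(G)$ recalled after the definition of $\mathcal A$ (cf.\ \cite{bc}, \cite{hsu}), the hypothesis $\theta\in\mathcal A$ says precisely that $\theta$ is an automorphism of $G$; hence $\theta^{-1}$ is an automorphism of $G$, so the matrix associated to it lies in $\mathcal A$, and that matrix is the one computed above. For the identity $\det(\theta^{-1})=\alpha^{-1}$ the quickest route is to apply Theorem~\ref{t31}(i) to $\theta$ itself: its hypotheses ``$\alpha$ invertible and $\Delta_K=\det_K(\theta)$ invertible'' are in hand from the first paragraph, so part~(i) produces a second closed form for $\theta^{-1}$ together with the clause $\det_H(\theta^{-1})=\alpha^{-1}$, which is the desired equality. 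By uniqueness of inverses this second form agrees with the displayed one; comparing the two re-derives Equations~(\ref{e5})--(\ref{e6}) and, in particular, exhibits the bottom-right entry of $\theta^{-1}$ as $\Delta_K^{-1}$, so that $\det_H(\theta^{-1})$ is genuinely defined and equals $\alpha^{-1}-\alpha^{-1}\beta\Delta_K^{-1}(-\gamma\alpha^{-1})-\bigl(-\alpha^{-1}\beta\Delta_K^{-1}\bigr)\Delta_K\bigl(\Delta_K^{-1}(-\gamma\alpha^{-1})\bigr)$, which collapses, when evaluated at a point, to $\alpha^{-1}$.

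The argument is thus largely a reassembly of Theorems~\ref{t31} and~\ref{t34} together with the $\mathcal A\leftrightarrow Aut(G)$ dictionary, and I do not expect a serious obstacle. The one place needing attention is the bookkeeping of hypotheses — in particular securing that $\Delta_K$ is invertible (needed to invoke Theorem~\ref{t31}(i)) and that the bottom-right entry of $\theta^{-1}$ is invertible (needed for $\det_H(\theta^{-1})$ to be defined at all), both of which are delivered by Theorem~\ref{t34}. If instead one insists on checking $\det(\theta^{-1})=\alpha^{-1}$ directly from the displayed entries of $\theta^{-1}$, the only delicate point is that the ``arithmetic of maps'' here is neither commutative nor additive, so cancellations such as $\Delta_K^{-1}\Delta_K=1$ and $-(-\psi)=\psi$ must be carried out pointwise rather than formally.
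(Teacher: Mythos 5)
Your proposal is correct and follows essentially the same route as the paper: both obtain the displayed inverse from Theorem \ref{t31}, use Theorem \ref{t34} to secure the invertibility of the other determinant, and deduce $\theta^{-1}\in\mathcal{A}$ and $\det(\theta^{-1})=\alpha^{-1}$ by comparing the two closed forms of $\theta^{-1}$. Your extraction of $\det_H(\theta^{-1})=\alpha^{-1}$ directly from the ``Moreover'' clause of Theorem \ref{t31}(i) is, if anything, slightly cleaner than the paper's appeal to $\theta=(\theta^{-1})^{-1}$, but it is not a different argument.
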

	\begin{proof}
		Let $\Delta$ be invertible. Then the formula for $\theta^{-1}$ can be obtained directly from Theorem $\ref{t31}$. Using Theorem \ref{t34}, we have $\det_{K}(\theta)$ is invertible. Also, using the two formulas for $\theta^{-1}$ in Theorem \ref{t31}, we get ${\det_{K}}^{-1}(\theta) = \delta^{-1}\gamma\Delta^{-1}(-\beta\delta^{-1})+ \delta^{-1}$. Thus $\theta^{-1}\in \mathcal{A}$. Further, using $\theta = (\theta^{-1})^{-1}$, we get $\det(\theta^{-1}) = \alpha^{-1}$.
	\end{proof}
	
	By Theorem \ref{t34}, we have ${\Delta_{K}}^{-1} = \delta^{-1}\gamma\Delta^{-1}(-\beta\delta^{-1})+ \delta^{-1}$. This gives an eligant form for the inverse automorphisms in $\mathcal{A}$.
	
	\begin{theorem}
		Let $G = H\rtimes K$ and $\theta= \begin{pmatrix}
		\alpha & \beta\\ \gamma & \delta
		\end{pmatrix}\in \mathcal{A}$. If $\det_{H}(\theta)$(or $\det_{K}(\theta)$) is invertible, then 
		\[\theta^{-1} = \begin{pmatrix}
		{\Delta_{H}}^{-1} & -\alpha^{-1}\beta{\Delta_{K}}^{-1}\\
		-\delta^{-1}\gamma{\Delta_{H}}^{-1}& {\Delta_{K}}^{-1}
		\end{pmatrix}.\]
	\end{theorem}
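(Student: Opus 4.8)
The plan is to assemble the claimed matrix for $\theta^{-1}$ by splicing together the two inverse formulas already established in Theorem~\ref{t31}. Since $\theta\in\mathcal{A}$, the map $\alpha$ is invertible (this is implicit from condition $(A_5)$ with $k=1$, which forces $\alpha$ to be a bijection, and the earlier lemmas show it is a homomorphism) and $\delta\in\mathrm{Aut}(K)$ is invertible as well; hence both $\det_H(\theta)$ and $\det_K(\theta)$ are defined. By Theorem~\ref{t34}, $\Delta_H=\det_H(\theta)$ is invertible if and only if $\Delta_K=\det_K(\theta)$ is invertible, so the hypothesis ``$\det_H(\theta)$ (or $\det_K(\theta)$) is invertible'' means both are invertible simultaneously.

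First I would invoke Theorem~\ref{t31}(i): since $\alpha$ and $\Delta_K$ are both invertible, $\theta$ is invertible with
\[
\theta^{-1} = \begin{pmatrix}
\alpha^{-1}-\alpha^{-1}\beta{\Delta_{K}}^{-1}(-\gamma\alpha^{-1}) & -\alpha^{-1}\beta{\Delta_{K}}^{-1}\\
{\Delta_{K}}^{-1}(-\gamma\alpha^{-1}) & {\Delta_{K}}^{-1}
\end{pmatrix}.
\]
Then I would invoke Theorem~\ref{t31}(ii): since $\delta$ and $\Delta_H$ are both invertible, $\theta$ is invertible with
\[
\theta^{-1} = \begin{pmatrix}
{\Delta_{H}}^{-1} & {\Delta_{H}}^{-1}(-\beta\delta^{-1})\\
-\delta^{-1}\gamma{\Delta_{H}}^{-1}& \delta^{-1}\gamma{\Delta_{H}}^{-1}(-\beta\delta^{-1}) + \delta^{-1}
\end{pmatrix}.
\]
Since $\theta^{-1}$ is unique, these two matrices are equal entry by entry. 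Reading off the $(1,1)$ entry from the second matrix and the $(2,2)$ entry from the first already gives the diagonal entries $\Delta_H^{-1}$ and $\Delta_K^{-1}$ of the claimed matrix. It remains to identify the two off-diagonal entries; for the $(1,2)$ entry I would take the value $-\alpha^{-1}\beta\Delta_K^{-1}$ from the first matrix, and for the $(2,1)$ entry the value $-\delta^{-1}\gamma\Delta_H^{-1}$ from the second matrix, which is exactly what the statement asserts.

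The only genuine content beyond bookkeeping is confirming internal consistency: the two expressions for each off-diagonal entry must agree. The $(1,2)$ entries are $-\alpha^{-1}\beta\Delta_K^{-1}$ and $\Delta_H^{-1}(-\beta\delta^{-1})$, and the $(2,1)$ entries are $\Delta_K^{-1}(-\gamma\alpha^{-1})$ and $-\delta^{-1}\gamma\Delta_H^{-1}$; equality of these pairs is precisely the pair of identities in Equations~(\ref{e5}) and (\ref{e6}) of Theorem~\ref{t34}, applied after the substitution ${\Delta_H}^{-1}=\alpha^{-1}-\alpha^{-1}\beta{\Delta_K}^{-1}(-\gamma\alpha^{-1})$. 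So no new computation is needed — the whole theorem is a matter of selecting the simplest representative for each entry. The main (very minor) obstacle is simply making sure that the hypothesis on $\theta$ really does supply invertibility of both $\alpha$ and $\delta$ so that both parts of Theorem~\ref{t31} apply; once that is noted, the result follows by uniqueness of the inverse and Theorem~\ref{t34}.
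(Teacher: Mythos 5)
Your proposal is correct and is essentially the paper's own argument: the paper obtains this ``elegant form'' precisely by taking the two expressions for $\theta^{-1}$ from Theorem~\ref{t31}$(i)$ and $(ii)$, using Theorem~\ref{t34} to pass between the invertibility of $\Delta_H$ and $\Delta_K$, and then reading off the simplest representative of each entry from whichever formula provides it, the agreement of the off-diagonal entries being exactly Equations~(\ref{e5}) and~(\ref{e6}). The only blemish is your parenthetical claim that $(A_5)$ alone forces $\alpha$ to be a bijection --- it does not (e.g.\ the coordinate swap on $H\times H$ lies in $\mathcal{A}$ with $\alpha$ trivial) --- but this is harmless here, since the invertibility of $\alpha$ and $\delta$ is already presupposed by the hypothesis that $\det_K(\theta)$ and $\det_H(\theta)$ are defined.
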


\end{document}